\documentclass[12pt, letterpaper,reqno]{amsart}
\usepackage{amsthm,amssymb,mathrsfs}
\usepackage{amsmath,amscd}
\usepackage{graphicx} 
\usepackage{xcolor}
\usepackage{tikz}
\usepackage{fourier}
\usepackage[breaklinks,colorlinks,citecolor=blue,linkcolor=blue,
 urlcolor=teal]{hyperref} 

\setlength{\textwidth}{6in}
\setlength{\textheight}{7.9in}
\setlength{\oddsidemargin}{0in}
\setlength{\evensidemargin}{0in}

\theoremstyle{plain}
\newtheorem{theorem}[equation]{Theorem}
\newtheorem{proposition}[equation]{Proposition}
\newtheorem{lemma}[equation]{Lemma}

\theoremstyle{definition}
\newtheorem{definition}[equation]{Definition}
\newtheorem{remark}[equation]{Remark}

\newtheorem*{ack}{Acknowledgments}

\newcommand{\co}{\colon \thinspace}

\newcommand{\Z}{{\mathbb Z}}

\newcommand{\abs}[1]{\lvert {#1} \rvert}

\renewcommand{\leq}{\leqslant}
\renewcommand{\geq}{\geqslant}
\renewcommand{\epsilon}{\varepsilon}
\renewcommand{\phi}{\varphi}

\numberwithin{equation}{section}

\makeatletter
  \def\tagform@#1{\maketag@@@{%
   \textbf{(\ignorespaces#1\unskip\@@italiccorr)}}}%
   \renewcommand{\eqref}[1]{\textup{\maketag@@@{(\ignorespaces%
        {\ref{#1}}\unskip\@@italiccorr)}}}
\makeatother

\begin{document}

\title[Finite-index accessibility]{Finite-index accessibility of groups}

\author{Max Forester}
\address{Mathematics Department\\
        University of Oklahoma\\
        Norman, OK 73019\\
        USA}
\author{Anthony Martino}
\address{1640 Redfield St.\\
        La Crosse, WI 54601\\
        USA}

\email{mf@ou.edu, martino.m.tony@gmail.com}


\begin{abstract}
We prove an accessibility theorem for finite-index splittings of
groups. Given a finitely presented group $G$ there is a number $n(G)$
such that, for every reduced locally finite $G$--tree $T$ with finitely
generated stabilizers, $T/G$ has at most $n(G)$ vertices and edges. 
We also show that deformation spaces of locally finite trees (with
finitely generated stabilizers) are maximal in the partial ordering of
domination of $G$--trees. 
\end{abstract}

\maketitle

\thispagestyle{empty}

\section{Introduction}

Given a group $G$, one may wish to understand the various ways in
which $G$ decomposes as the fundamental group of a graph of groups.
For simplicity, we shall call a graph of groups decomposition a
\emph{splitting} of $G$. There are several theorems in the literature,
known as \emph{accessibility} theorems, which state that all splittings
of $G$ of a particular kind have underlying graphs of bounded
complexity. That is, there is a bound on the total number of vertices
and edges. 

An early result of this kind concerns free products. Recall that the
\emph{rank} of a group is the smallest size of a generating set. By
Grushko's theorem, rank is additive under free products. It follows
easily that for a finitely generated group $G$, there is a bound on
the complexity of any reduced splitting of $G$ having trivial edge
groups. 

Dunwoody's accessibility theorem \cite{dunwoody} provides a complexity
bound on all reduced splittings of $G$ having finite edge groups, if
$G$ is finitely presented. A fundamental structure theorem then
follows from this result: every such group admits a decomposition
whose edge groups are finite and whose vertex groups each have at most
one end. Dunwoody accessibility does not hold for all finitely
generated groups \cite{dunwoody2}, though Linnell had shown earlier
that it does if one assumes a bound on the orders of the finite subgroups
of $G$ \cite{linnell}.

Bestvina and Feighn \cite{bestvinafeighn,bestvinafeighn2} generalized
Dunwoody's theorem by bounding the complexity of all reduced
splittings of $G$ having small edge groups. Here, a group is
\emph{small} if it does not admit an irreducible action on a
simplicial tree. (Note that every group with no non-abelian free
subgroup is small.) Bestvina and Feighn also gave examples showing
that once one allows the free group of rank 2 as an edge group, then
no such bounds can exist. 

Sela's acylindrical accessibility theorem \cite{sela} generalizes the
free product case in a different way. A graph of groups is called
\emph{$k$--acylindrical} if its associated Bass-Serre tree has the
property that the stabilizer of every segment of length greater than
$k$ is trivial. Sela proved that for every finitely generated group
$G$ and $k\geq 1$, there is a bound on the complexity of all reduced
$k$--acylindrical splittings of $G$. 

Sela's result has been generalized to $(k,C)$--acylindrical
splittings of groups by Delzant and Weidmann \cite{delzant,weidmann},
where a splitting is called \emph{$(k,C)$--acylindrical} if every
segment of length greater than $k$ has stabilizer of cardinality at
most $C$. 

Finally there is a notion of \emph{strong accessibility}, which
concerns finiteness of hierarchies of splittings of a given type. See
Louder and Touikan \cite{loudertouikan} for results on hierarchies of
splittings over slender groups of finitely presented groups. 

\medskip

In the current paper we prove an accessibility theorem for
\emph{finite-index splitting}s of groups. These are graph of groups
decompositions in which every edge group includes as a finite-index
subgroup of its neighboring vertex groups. The Bass-Serre trees of
such splittings are exactly the $G$--trees that are locally
finite.%
\footnote{Here we are only considering graphs of groups with finite
  underlying graphs.}

Each of the earlier results has a hypothesis limiting the edge
groups or segment stabilizers in size or complexity. In our theorem
the limitation works in the other direction, with edge groups being as
large as possible relative to the vertex groups. 

Recall from \cite{stallings} that a group $G$ is \emph{almost finitely
presented} if there is a connected simplicial complex $K$ with
$H^1(K;\Z_2) = 0$ on which $G$ acts freely and cocompactly. Every
finitely presented group is almost finitely presented, but not
conversely (see \cite{bieristrebel} for a counterexample). 

\begin{theorem}\label{mainthm} 
Let $G$ be an almost finitely presented group. Then there is an
integer $n(G)$ such that the following holds: if $T$ is
a reduced locally finite $G$--tree with finitely generated 
stabilizers, then $T/G$ has at most $n(G)$ vertices and edges. 
\end{theorem}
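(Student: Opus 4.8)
The plan is to adapt the proof of Dunwoody's accessibility theorem, which counts tracks in an almost finitely presented complex, to the setting of finite-index splittings. The essential new feature is that a finite-index splitting may have infinite edge groups, so the tracks one must analyse need not be finite complexes; coping with this is where most of the work will go.

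First I would fix, once and for all, a connected simplicial complex $K$ with $H^1(K;\Z_2)=0$ carrying a free cocompact action of $G$ (this is the meaning of ``almost finitely presented''); all the quantities constructed below will depend only on the finite complex $K/G$, hence only on $G$. Now let $T$ be a reduced locally finite $G$--tree with finitely generated stabilizers. One first checks, using local finiteness and finite generation of the stabilizers, that $T$ is necessarily $G$--minimal: otherwise an edge at the frontier of the minimal subtree would be collapsible, contradicting reducedness. Since $G$ acts freely on $K$ and $T$ is contractible, there is a $G$--equivariant map $f\co K\to T$ — choose images for a set of orbit representatives of the vertices, then extend over the edges and the $2$--cells — and after an equivariant homotopy I may assume that $\Ps=f^{-1}(\mu)$ is a pattern in $K$, where $\mu$ denotes the set of midpoints of edges of $T$. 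Because $f(K)$ is a connected $G$--invariant subtree of the minimal tree $T$, the map $f$ is onto; and since $K/G$ is compact, $T/G$ is then compact, hence finite. Thus cocompactness of the action, which was not assumed, already follows from the hypotheses.

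Next I would use the pattern $\Ps$. Each track of $\Ps$ lies over a single point of $\mu$, so it is associated with a unique edge of $T$, and each component of $K\setminus\Ps$ maps into the open star of a unique vertex of $T$. These two assignments define a $G$--equivariant simplicial map $S\to T$ that collapses no edge and is onto, where $S$ is the graph with one vertex per component of $K\setminus\Ps$ and one edge per track. As in Dunwoody's work, the hypothesis $H^1(K;\Z_2)=0$ forces every track to separate $K$, and hence $S$ is a tree. Since $S\to T$ is a non-collapsing surjection, $T/G$ has no more vertices and no more edges than $S/G$, so it now suffices to bound the number of $G$--orbits of tracks and of components of $K\setminus\Ps$ by a constant depending only on $K$.

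That bound is the crux of the matter. I would first replace $f$ by one that minimizes, say, the number of $G$--orbits of points of $\Ps$ on the $1$--skeleton of $K$; for such an $f$ the pattern $\Ps$ is \emph{efficient}, in that no two of its tracks cobound a product region meeting nothing else of $\Ps$. The goal is then a counterpart of Dunwoody's combinatorial accessibility bound: given $H^1(K;\Z_2)=0$ and a free cocompact $G$--action, there is a number $N(K)$ bounding the number of $G$--orbits of tracks in every efficient $G$--invariant pattern. In Dunwoody's setting the tracks are finite complexes, and that finiteness is used throughout the innermost-track and counting arguments; here a track lying over an edge $e$ of $T$ is only $G_e$--cocompact, and $G_e$ can be infinite. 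This is precisely where the finite-index hypothesis is indispensable: if $e$ joins vertices $v$ and $w$ then $G_e$ has finite index in both $G_v$ and $G_w$, so a track and the two complementary regions it bounds are cocompact under mutually commensurable, finitely generated groups; consequently the product regions produced in the argument are again cocompact, and the finite counts in Dunwoody's proof can be replaced, systematically, by finite $G$--orbit counts. Making the tracks machinery run with infinite but cocompact, finitely-generated-stabilizer tracks is, I expect, the main obstacle; granting it, one takes $n(G)=N(K)$ plus the analogous bound on the number of orbits of complementary regions, and the theorem follows. (The paper's other principal result — that deformation spaces of locally finite trees with finitely generated stabilizers are maximal under domination — enters naturally here: the auxiliary tree $S$ dominates $T$, hence lies in $T$'s deformation space, which is what keeps $S$ within the class of locally finite trees with finitely generated stabilizers to which the preceding analysis applies.)
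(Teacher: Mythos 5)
Your construction of the resolution is essentially the paper's: the dual tree $S$ of the pattern $\Ps\subset K$, together with the surjective morphism $S\to T$, is exactly the Dunwoody--Fenn resolution $\alpha\co T'\to T$ that the paper invokes from \cite{dunwoodyfenn}. The divergence, and the gap, is at what you call the crux. The bound you want --- a number $N(K)$, depending only on $K$, on the number of $G$--orbits of tracks in every efficient $G$--invariant pattern --- is not merely unproved in your sketch; it is false, and the paper's own Section 4 shows this. There one finds reduced locally finite $F_2$--trees $T_k$ with $k+1$ vertex orbits for arbitrary $k$ (the stabilizers are infinitely generated free groups). Fixing a complex $K$ for $F_2$, each $T_k$ is resolved by an efficient pattern in $K$ whose dual tree surjects onto $T_k$, so the dual tree has at least $k+1$ vertex orbits and at least $k$ track orbits. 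Nothing in your formulation of the crux uses finite generation of the stabilizers: the tracks are cocompact under their (commensurable) stabilizers and the efficiency condition can be arranged whether or not those stabilizers are finitely generated, so the ``systematic replacement of finite counts by orbit counts'' you propose would prove a statement that these examples refute. What \cite{dunwoodyfenn} actually delivers is weaker and is exactly what survives: a bound $\delta(G)$ on the orbits of \emph{essential} vertices of $S$. Inessential vertices --- regions bounded by exactly two tracks whose stabilizers equal the region's stabilizer --- need not be product regions between parallel tracks, so efficiency does not eliminate them, and the surjection $S\to T$ by itself then gives no bound on $|V(T/G)|$.

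The missing ingredient is Theorem \ref{mainthm2}, and not in the auxiliary role you assign it. The paper applies it to conclude that $S$ and $T$ lie in the same deformation space, i.e.\ have the same elliptic subgroups; this is where local finiteness and finite generation of the stabilizers of $T$ genuinely enter (finite generation is needed to factor the morphism $S\to T$ into folds, and local finiteness makes all vertex stabilizers commensurable so that the fold and collapse analyses of Propositions \ref{collapse-def} and \ref{fold-def} go through). Granting that, each vertex stabilizer $G_v$ of $T$ is elliptic in $S$ and hence fixes an \emph{essential} vertex $w$ of $S$; either $\alpha(w)=v$ or $G_v$ fixes the first edge $e$ of the path from $v$ to $\alpha(w)$, so $G_e=G_v$ and, since $T$ is reduced, $e$ maps to a loop in $T/G$ --- and there are at most $\beta_1(G)$ such edge orbits. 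This converts the essential-vertex bound into $|V(T/G)|\leq \delta(G)+\beta_1(G)$. To repair your argument you would need to replace the false track-counting claim with this deformation-space step (or some substitute showing that every vertex stabilizer of $T$ is elliptic, not merely elliptic at infinity, in $S$).
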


Note that in any locally finite $G$--tree, all vertex and edge
stabilizers are commensurable. Hence either they are all finitely
generated, or none of them are. 

\begin{remark}
The assumption of finitely generated stabilizers cannot be
dropped. At the end of the paper we give examples of finite-index
splittings of the free group $F_2$, of arbitrarily large complexity, 
with all vertex and edge groups free of infinite rank. 
\end{remark}

Our main technical result leading to Theorem \ref{mainthm} is a
statement about deformation spaces of locally finite trees. Let $X$
and $Y$ be cocompact $G$--trees. We say that $X$ \emph{dominates} $Y$
if there is a simplicial $G$--map $X \to Y$ (after possibly
subdividing $X$). This occurs if and only
if every elliptic subgroup for $X$ is elliptic for $Y$. The trees are
in the same \emph{deformation space} if they dominate each other, or
equivalently, have the same elliptic subgroups. There is then an
induced partial ordering of domination of deformation spaces. 
The next result says that deformation spaces of non-trivial locally
finite trees (with finitely generated stabilizers) are maximal in this
partial ordering. 

\begin{theorem}\label{mainthm2} 
Let $Y$ be a non-trivial locally finite $G$--tree with finitely
generated stabilizers. If $f \co X \to Y$ is any surjective
simplicial map of $G$--trees, with $X$ cocompact, then $X$ and $Y$ are
in the same deformation space. 
\end{theorem}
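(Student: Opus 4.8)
The plan is the following. One inclusion is immediate: since $f$ is a simplicial surjection, any subgroup fixing a vertex $x$ of $X$ also fixes the vertex $f(x)$ of $Y$, so every elliptic subgroup for $X$ is elliptic for $Y$; thus $X$ dominates $Y$, and what remains is to show that $Y$ dominates $X$, i.e.\ that $X$ and $Y$ have the same elliptic subgroups. As every $Y$--elliptic subgroup lies in a vertex stabilizer of $Y$, it suffices to show that each vertex stabilizer $G_v$ of $Y$ fixes a vertex of $X$.

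The next step would be a reduction that finally uses local finiteness. In $Y$ every edge $e$ incident to $v$ has $G_e \leq G_v$, and since all vertex and edge stabilizers of a locally finite $G$--tree are commensurable, $[G_v:G_e]<\infty$; as $v$ has only finitely many incident edges, $C_v := \bigcap_{e\ni v}G_e$ is a finitely generated, finite-index subgroup of $G_v$ which is normal in $G_v$ (the group $G_v$ permutes the incident edges, hence permutes their stabilizers) and which fixes the closed star of $v$ pointwise. If $C_v$ fixes a vertex of $X$, then $\mathrm{Fix}_X(C_v)$ is a nonempty $G_v$--invariant subtree on which $G_v$ acts through the finite quotient $G_v/C_v$; a finite group acting on a tree fixes a point, hence (subdividing if necessary) a vertex, so $G_v$ fixes a vertex of $X$. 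Thus it is enough to show that each $C_v$ is $X$--elliptic.

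Now suppose, for contradiction, that some $G_v$ is \emph{not} $X$--elliptic. A finitely generated group acting on a tree with no fixed vertex contains a hyperbolic element (a finitely generated group all of whose elements are elliptic on a tree fixes a vertex), so $G_v$ contains an element $g$ hyperbolic on $X$. Since some positive power of $g$ lies in any prescribed finite-index subgroup of $G_v$, and powers of hyperbolic elements are hyperbolic, every finite-index subgroup of $G_v$ is non-elliptic on $X$; in particular every edge group $G_e$ with $e\ni v$ is, hence so is the vertex group at the other end of $e$, and, since $Y/G$ is connected, \emph{every} vertex and edge stabilizer of $Y$ is non-elliptic on $X$ and contains an $X$--hyperbolic element.

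The remaining task --- which I expect to be the crux --- is to contradict the cocompactness of $X$. The mechanism is the tension between the $X$--hyperbolicity and the $Y$--ellipticity of these subgroups: if $h\in G_v$ is hyperbolic on $X$ with axis $A$, then $h$ is elliptic on the $h$--invariant subtree $f(A)$ of $Y$, so some power $h^k$ fixes $f(A)$ pointwise; hence $f|_A$ is $\langle h^k\rangle$--invariant and factors through the finite circle $A/\langle h^k\rangle$, forcing $f(A)$ to be a \emph{finite} subtree of $Y$. So the hyperbolic behaviour produced above is confined over bounded parts of $Y$; meanwhile, for each edge $e$ of $Y$, the edges of $X$ mapping onto $e$ fall into finitely many $G_e$--orbits (two midpoints over $e$ are $G$--equivalent only when they are $G_e$--equivalent, and $G\backslash X$ is finite), and $Y$ has only finitely many edges at each vertex. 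The plan is to play these finiteness statements against one another to deduce that $G\backslash X$ cannot be finite. Concretely, one may factor $f$ as the collapse of the $G$--invariant subforest of edges of $X$ that $f$ sends to vertices, followed by a non-degenerate morphism $X_1\to Y$ which is a composition of Bestvina--Feighn folds, and argue that under the present hypotheses none of these moves --- neither the collapse nor any fold --- can enlarge the elliptic subgroups. Converting the propagated non-elliptic behaviour into a quantitative violation of cocompactness and local finiteness is the step I anticipate being the main obstacle; the earlier reductions are routine Bass--Serre theory.
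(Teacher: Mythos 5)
Your reductions are correct and you have correctly located the crux, but the proof stops exactly where the real work begins: you never actually derive a contradiction. Everything up to and including ``every vertex and edge stabilizer of $Y$ contains an $X$--hyperbolic element'' is sound (the $C_v$ detour is unnecessary but harmless), and the observation that the image $f(A)$ of a hyperbolic axis is a \emph{finite} subtree of $Y$ is a genuine and relevant use of local finiteness. But ``play these finiteness statements against one another'' is not an argument, and you flag it yourself as the anticipated obstacle. The finiteness facts you list (finitely many $G_e$--orbits of edges over each edge of $Y$, finite valence in $Y$, finiteness of $f(A)$) do not combine in any evident way to contradict cocompactness of $X$; in particular it is not clear what quantity is supposed to be simultaneously finite and infinite.

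For comparison, the paper does not argue with $f$ directly. It factors $f$ as a collapse map followed by a chain of folds (as you propose at the end), and proves two propositions: a collapse map onto a tree containing a non-trivial locally finite invariant subtree preserves elliptic subgroups, and a fold onto such a tree preserves the deformation space. The contradictions there are concrete instances of your ``tension'': for a collapse, the minimal subtree $X_H$ of the single commensurability class of stabilizers would have to lie in the disjoint preimages $q^{-1}(y)$ and $q^{-1}(y')$ of two distinct vertices; for a type A fold, an infinite orbit $\{g^ie''\}$ would have to map injectively into a bounded, hence finite, region of the locally finite subtree. If you pursue your factorization plan, be aware of a further issue you have not addressed: the intermediate trees in the fold sequence need not be locally finite (nor minimal), so statements about folds onto locally finite trees do not apply to them directly. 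The paper handles this by stating both propositions for trees that merely \emph{contain} a non-trivial locally finite $G$--invariant subtree, and by proving separately (Lemma \ref{lfsubtree}) that this property propagates backwards through the chain of folds via elementary deformations. Without that bookkeeping, the induction from $Y$ back to $X$ does not go through.
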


If $X$ is minimal then this implies that it must also be locally
finite. 
Like
Theorem \ref{mainthm}, this result fails if one does not assume $Y$ to
have finitely generated stabilizers. Consider any HNN extension $G = A 
\ast_C$ whose Bass--Serre tree $X$ is not a line. Let $H\subset G$ be 
the subgroup generated by all conjugates of $A$.  This subgroup is not
finitely generated and is not elliptic for $X$.  However, there is a
morphism $X \to Y$ onto a linear $G$--tree whose vertex and edge
stabilizers are all $H$. These trees are in different deformation
spaces. 

The assumption of non-triviality is also essential, as every $G$--tree
admits a simplicial $G$--map to a point. 

\begin{ack}
The first-named author was partially supported by Simons Foundation
award \#638026. We thank the referee for several helpful suggestions. 
\end{ack}

\section{Preliminaries}

For more detail on the background material given here, see
\cite{basscovering,defrigid,bestvinafeighn}. 
Our convention for graphs and trees is that every
edge $e$ is oriented, with initial vertex $\partial_0(e)$ and terminal
vertex $\partial_1(e)$. Then $\overline{e}$ denotes the same geometric
edge with
the opposite orientation, so that $\partial_0(\overline{e})
= \partial_1(e)$ and $\partial_1(\overline{e}) = \partial_0(e)$. 

By a \emph{$G$--tree} we mean a simplicial tree together with a
$G$--action by simplicial automorphisms, with no inversions.
A $G$--tree is \emph{trivial} it it has a global fixed
point; otherwise, it is \emph{non-trivial}. 
Let $X$ be a $G$--tree. An element $g\in G$ is \emph{elliptic} if it
fixes a vertex and \emph{hyperbolic} otherwise. If $g$ is hyperbolic,
then there is a $g$--invariant line $X_g \subset X$ called the
\emph{axis} of $g$ on which $g$ acts as a translation. 

An \emph{end} of $X$ is an equivalence class of rays, where two rays
are considered equivalent if their intersection is a ray. The space of
ends of $X$ is denoted by $\partial X$. A subtree \emph{contains} the
end $\varepsilon \in \partial X$ if it contains a ray representing
$\varepsilon$. 

\begin{lemma}[\cite{tits}, (3.4)] 
Let $X$ be a $G$--tree and $H \subset G$ a subgroup such that every
element of $H$ is elliptic. Then either: 
\begin{enumerate}
\item\label{e1} $H$ fixes a vertex of $X$, or 
\item\label{e2} $H$ has no fixed vertex, but there is a unique end
  $\varepsilon_H \in \partial X$ fixed by $H$.  
\end{enumerate}
\end{lemma}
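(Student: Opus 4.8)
The plan is to study the family of fixed subtrees $X^g := \{x \in X : gx = x\}$ for $g \in H$; each is a nonempty subtree because $g$ is elliptic. Two preliminary facts drive the argument. The first is the standard ``product of elliptics'' lemma for trees: if $a,b$ are elliptic with $X^a \cap X^b = \emptyset$, then $ab$ is hyperbolic, with translation length $2\,d(X^a,X^b)$ and axis containing the bridge between $X^a$ and $X^b$; this follows from a short computation with the bridge $[p,q]$ (where $p\in X^a$, $q\in X^b$), using that $a$ and $b$ displace $q$ and $p$ away from $[p,q]$. The second is the Helly property for subtrees of a tree: finitely many pairwise-intersecting subtrees have a common vertex. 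Applying the first fact with $a,b\in H$: since $ab\in H$ is elliptic, necessarily $X^a\cap X^b\neq\emptyset$. Hence the subtrees $X^g$, $g\in H$, pairwise intersect, so by Helly the downward-directed family $\{X^F : F\subseteq H\text{ finite}\}$, where $X^F := \bigcap_{g\in F} X^g$, consists of nonempty subtrees.

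Now dichotomize on $\bigcap_{g\in H} X^g = \bigcap_F X^F$. If it is nonempty, any vertex in it is fixed by $H$ and we are in conclusion \eqref{e1}. Suppose instead it is empty; I claim conclusion \eqref{e2} holds. Fix a finite $F_0\subseteq H$ and a vertex $v_0\in X^{F_0}$, and for each finite $F\supseteq F_0$ let $\pi_F\in X^F$ be the nearest-point projection of $v_0$ to $X^F$. Since $X^{F'}\subseteq X^F$ whenever $F'\supseteq F$, the geodesic $[v_0,\pi_{F'}]$ passes through $\pi_F$; as the index family is directed, this forces the $\pi_F$ to be pairwise comparable along geodesics issuing from $v_0$, so their union is an initial segment of a single ray based at $v_0$. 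Were the distances $d(v_0,\pi_F)$ bounded, this directed set of integers would stabilize at some $F^*$, giving $\pi_F=\pi_{F^*}$ for all $F\supseteq F^*$; then $\pi_{F^*}\in X^{F^*\cup\{g\}}\subseteq X^g$ for every $g\in H$, contradicting that the global intersection is empty. Hence the $\pi_F$ exhaust a genuine ray $\rho$, defining an end $\varepsilon_H\in\partial X$. For each $g\in H$ and each finite $F\supseteq F_0\cup\{g\}$, convexity of $X^F$ gives $[\pi_F,\pi_{F'}]\subseteq X^F$ for all $F'\supseteq F$, so $X^F$---and hence $X^g\supseteq X^F$---contains a subray of $\rho$; as $g$ fixes $X^g$ pointwise, $g$ fixes $\varepsilon_H$. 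And $H$ fixes no vertex, by the case assumption.

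Finally, uniqueness: if $H$ also fixed an end $\varepsilon'\neq\varepsilon_H$, it would fix the line $\ell$ joining $\varepsilon_H$ and $\varepsilon'$, and each $g\in H$, acting on $\ell$ fixing both ends, would act as the identity or a nontrivial translation of $\ell$; a translation of $\ell$ moves the $\ell$-projection of every vertex of $X$, hence has no fixed vertex, contradicting ellipticity. So every $g\in H$ fixes $\ell$ pointwise, whence $H$ fixes any vertex of $\ell$---impossible in case \eqref{e2}. Thus $\varepsilon_H$ is unique. I expect the construction of the ray $\rho$ to be the delicate point: $X$ is not assumed locally finite, so balls about $v_0$ may be infinite and one cannot argue by compactness; the argument must instead be routed through the monotonicity of nested projections, with the bounded case treated separately precisely to rule out a hidden global fixed vertex.
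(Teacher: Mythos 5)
The paper offers no proof of this lemma; it is quoted directly from Tits \cite{tits}, (3.4). Your argument is correct and is essentially the standard one: the product-of-elliptics lemma forces the fixed trees $X^g$ to pairwise intersect, Helly gives nonempty finite intersections, and the dichotomy on $\bigcap_{g\in H}X^g$ yields either a fixed vertex or, via the unbounded nested projections $\pi_F$, a ray whose end is fixed by $H$; the uniqueness step (a fixed line would force a fixed vertex) is also right. The one point worth stating a little more carefully is that the segments $[v_0,\pi_F]$ are totally ordered by inclusion (any two lie inside $[v_0,\pi_{F_1\cup F_2}]$ with common endpoint $v_0$), so their union really is a single ray when the lengths are unbounded -- but this is exactly the monotonicity you invoke, and the proof stands.
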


\noindent
In case \eqref{e1}, we say that $H$ is \emph{elliptic}. In case
\eqref{e2}, we say that $H$ is \emph{elliptic at infinity}. 

If $X$ is a $G$--tree, a \emph{minimal subtree} is a subtree
that is $G$--invariant and does not properly contain any
smaller $G$--invariant subtree. If $G$ is elliptic, then any
vertex fixed by $G$ is a minimal subtree. If $G$ contains a hyperbolic 
element, then a minimal subtree $X_G$ exists and is unique: it is
equal to the union of the axes of hyperbolic elements. The third
possibility is that $G$ is elliptic at infinity; then there is no
minimal subtree. Instead, there
is a descending chain of $G$--invariant subtrees, with empty
intersection, all containing the fixed end $\varepsilon_G$
(for instance, take a sequence of horoballs at $\varepsilon_G$). 

\begin{lemma}\label{subtree}
Let $X'$ be a $G$--invariant subtree of the $G$--tree $X$. 
\begin{enumerate}
\item \label{i1} $X$ and $X'$ have the same elliptic subgroups.
\item \label{i2} If\/ $G$ is elliptic at infinity for $X$ then $X'$
  contains the fixed end $\epsilon_G$. 
\item \label{i3} If\/ $G$ has a hyperbolic element then $X'$
  contains the minimal subtree $X_G$. 
\end{enumerate}
\end{lemma}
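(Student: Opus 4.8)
The plan is to prove the three parts essentially independently, using the structure theory of minimal subtrees recalled just before the statement. For part \eqref{i1}, the containment of elliptic subgroups in one direction is immediate: a subgroup fixing a vertex of $X'$ fixes a vertex of $X$, since $X' \subset X$. For the reverse direction, suppose $H \subset G$ fixes a vertex $v \in X$; I want to produce a fixed vertex inside $X'$. Pick any vertex $w \in X'$. The geodesic $[v,w]$ in $X$ meets $X'$ in a subsegment (since $X'$ is a convex subtree of $X$), and the endpoint $v'$ of $[v,w]$ nearest to $v$ lying in $X'$ is the nearest-point projection of $v$ onto $X'$. Because $X'$ is $G$--invariant and $H$ fixes $v$, the element $h \in H$ sends the projection of $v$ to the projection of $hv = v$, so $hv' = v'$; thus $H$ fixes $v' \in X'$. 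This is the standard projection argument.

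For part \eqref{i2}, assume $G$ is elliptic at infinity for $X$, so by Tits' lemma there is a unique end $\epsilon_G \in \partial X$ fixed by $G$, and no global fixed vertex. Let $w \in X'$ be any vertex and let $\rho$ be the ray from $w$ representing $\epsilon_G$; I claim $\rho \subset X'$, which gives the conclusion that $X'$ contains $\epsilon_G$. If not, then $\rho$ leaves $X'$ and, since $X'$ is convex, the intersection $\rho \cap X'$ is an initial segment $[w, w']$. Then $w'$ is the point of $X'$ closest to the end $\epsilon_G$, i.e.\ the projection of $\epsilon_G$ to $X'$; since $G$ fixes $\epsilon_G$ and preserves $X'$, it fixes $w'$, contradicting that $G$ has no fixed vertex. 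Hence $\rho \subset X'$ and $X'$ contains $\epsilon_G$.

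For part \eqref{i3}, assume $G$ contains a hyperbolic element $g$; then the minimal subtree $X_G$ exists and equals the union of the axes of the hyperbolic elements of $G$. It suffices to show that the axis $X_h$ of each hyperbolic $h \in G$ lies in $X'$. Since $X'$ is $G$--invariant and nonempty, pick $w \in X'$; the orbit segment $[w, hw]$ lies in $X'$, and as $h$ is hyperbolic its axis $X_h$ is the union over $k \in \Z$ of the translates $h^k([w,hw] \cap X_h)$ together with the geodesic spanned — more cleanly, $X_h \subset \bigcup_{k\in\Z} h^k[w,hw]$, and each $h^k[w,hw] \subset X'$ by invariance, so $X_h \subset X'$. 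Taking the union over all hyperbolic $h$ gives $X_G \subset X'$.

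The main obstacle, such as it is, is purely one of bookkeeping: making the nearest-point projection arguments in \eqref{i1} and \eqref{i2} precise requires knowing that $X'$, being a subtree, is convex in $X$, and that projection onto a convex subtree is well defined and $G$--equivariant for $G$--invariant subtrees — all standard facts about trees. No step presents a genuine difficulty; the lemma is foundational and the proof is short.
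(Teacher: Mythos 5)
Your proof is correct, but it is organized differently from the paper's. The paper first notes that $X$ and $X'$ have the same elliptic and hyperbolic \emph{elements} (each $g$ already has a fixed vertex or an axis inside $X'$), which gives \eqref{i3} at once; it then deduces \eqref{i1} by applying Tits' dichotomy to $H$ acting on $X'$ and ruling out ``elliptic at infinity'' via the observation that a subgroup fixing a vertex of $X$ and an end of $X'$ must fix the connecting ray, hence a vertex of $X'$; finally \eqref{i2} follows from \eqref{i1} together with the uniqueness of the fixed end. You instead treat the three parts independently by convexity and nearest-point projection: projecting the fixed vertex onto $X'$ for \eqref{i1}, projecting the fixed end onto $X'$ for \eqref{i2}, and covering each axis by the translates $h^k[w,hw]$ for \eqref{i3}. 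Your route for \eqref{i1} is more self-contained (it does not invoke Tits' lemma at all), while the paper's route for \eqref{i2} is shorter because it recycles \eqref{i1} and the uniqueness clause; your version of \eqref{i2} instead leans on the fact that the exit point $w'$ of a ray toward $\epsilon_G$ from $X'$ is independent of the chosen basepoint $w$ (so that it is genuinely ``the projection of $\epsilon_G$ to $X'$'' and hence $G$--fixed) --- this is true by a standard median argument, but it is the one assertion in your write-up that deserves an explicit sentence. Both arguments are sound.
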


\begin{proof}
First note that $X$ and $X'$ have the same elliptic and hyperbolic
elements; each $g$ has either a fixed vertex or an axis in $X'$, and
then has the same in $X$. Item
\eqref{i3} follows immediately since $X'$ must contain every
axis. Next, a subgroup cannot fix both a vertex of $X$ and an end of
$X'$ without also fixing a vertex of $X'$, so \eqref{i1}
holds. Finally, if $G$ is elliptic at infinity for $X$, it is also
elliptic at infinity for $X'$, by \eqref{i1}. So $G$ fixes a unique
end $\varepsilon$ of $X'$. Uniqueness of $\varepsilon_G$ for $X$
implies that $\varepsilon_G = \varepsilon$. 
\end{proof}

Recall that two subgroups $H, H'$ of a group $G$ are 
\emph{commensurable} if $H \cap H'$ has finite index in both $H$ and
$H'$. In any locally finite $G$--tree, 
stabilizers of vertices and edges are all commensurable to each other. 

\begin{lemma}\label{comm}
Let $X$ be a $G$--tree and let $H, H'$ be commensurable subgroups of
$G$. 
\begin{enumerate}
\item\label{comm1} If $H$ is elliptic then so is $H'$. 
\item\label{comm2} If $H$ is elliptic at infinity, then so is $H'$ and
  $\varepsilon_H = \varepsilon_{H'}$. 
\item\label{comm3} If $H$ contains a hyperbolic element, then so does
  $H'$ and $X_H = X_{H'}$. 
\end{enumerate}
\end{lemma}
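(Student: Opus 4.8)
The plan is to reduce everything to the subgroup $K = H \cap H'$, which by the definition of commensurability has finite index in both $H$ and $H'$, and then to pass between $K$ and each of its two overgroups using two elementary tools. The first is a \emph{power trick}: if $K$ has finite index in a group $L$, then for each $l \in L$ some positive power $l^n$ lies in $K$, because the cyclic group $\langle l \rangle$ acts on the finite left coset space $L/K$ and so has a finite orbit containing the trivial coset. Combined with the facts that an element of a $G$--tree is hyperbolic if and only if one of its positive powers is (equivalently, all of them are), and that a hyperbolic element has the same axis as each of its powers, this yields: (a) if every element of $K$ is elliptic, so is every element of $L$; (b) $K$ contains a hyperbolic element if and only if $L$ does; and (c) the hyperbolic elements of $K$ have exactly the same set of axes as those of $L$. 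The second tool is the \emph{center of a finite orbit}: if $K$ fixes a vertex $v$, then the $L$--stabilizer of $v$ contains $K$, so the orbit $Lv$ is finite; then $L$ preserves the convex hull of $Lv$, which is a finite subtree, hence fixes its center — and since the action has no inversions, this center (a vertex, or an edge whose two endpoints must then each be fixed) produces a vertex fixed by $L$. In other words, a finite-index subgroup of $L$ is elliptic if and only if $L$ is.

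With these tools in hand I would run the three cases as follows. For \eqref{comm1}: if $H$ is elliptic then so is $K \leq H$, hence so is $H'$ by the second tool. For \eqref{comm3}: if $H$ has a hyperbolic element then so does $K$ by (b), hence so does $H' \supseteq K$; thus $X_H$, $X_K$, and $X_{H'}$ all exist and each equals the union of the axes of the hyperbolic elements of the corresponding group, so (c) gives $X_H = X_K = X_{H'}$. For \eqref{comm2}: if $H$ is elliptic at infinity then every element of $H$, hence of $K$, hence by (a) of $H'$, is elliptic, so Tits's lemma puts $H'$ in case \eqref{e1} or \eqref{e2}; case \eqref{e1} is impossible, since then $K \leq H'$ would be elliptic and the second tool would force $H$ elliptic, contradicting that $H$ is elliptic at infinity. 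The same argument applied to $K$ shows $K$ is elliptic at infinity with a unique fixed end $\varepsilon_K$; since $\varepsilon_H$ is fixed by $H \supseteq K$ and $\varepsilon_{H'}$ is fixed by $H' \supseteq K$, uniqueness of the end fixed by $K$ gives $\varepsilon_H = \varepsilon_K = \varepsilon_{H'}$.

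The one step that needs genuine care — and the one I would single out as the crux — is the implication that a finite-index subgroup being elliptic forces the whole group to be elliptic: the fixed-point set of $K$ need not be a single vertex, and $K$ need not be normal in $L$, so one really does pass through the finite orbit $Lv$ and its convex hull, and this is precisely where the no-inversions convention is used. Everything else is formal: the power trick is purely group-theoretic, and the uniqueness of fixed ends is quoted directly from Tits's lemma. I would also note that case \eqref{comm2} is essentially two-directional — excluding ``$H'$ elliptic'' requires applying the elliptic case in the reverse direction — but this introduces no new difficulty.
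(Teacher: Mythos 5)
Your proof is correct, and it is more self-contained than the paper's, which disposes of items (1) and (3) by citation: (1) is quoted from Serre (Example 6.3.4 of \cite{serre}) and (3) from Bass (Corollary 7.7 of \cite{basscovering}), so your power trick, the finite-orbit/convex-hull argument (which is essentially Serre's proof of the cited fact), and the observation that axes are unchanged under passing to powers are exactly the content you are re-deriving. The genuine divergence is in item (2). Both proofs first conclude that $H'$ is elliptic at infinity by elimination via Tits's lemma, but for the equality of fixed ends the paper argues by contradiction: if $\varepsilon_H \neq \varepsilon_{H'}$, then $H \cap H'$ fixes both ends and, consisting of elliptic elements, fixes the geodesic line joining them pointwise, hence is elliptic, contradicting (1). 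You instead establish that $K = H \cap H'$ is itself elliptic at infinity with a \emph{unique} fixed end $\varepsilon_K$, and then squeeze $\varepsilon_H = \varepsilon_K = \varepsilon_{H'}$ from the fact that $K$ sits inside both $H$ and $H'$. Your route buys a positive argument with no case analysis on distinct ends; the paper's buys brevity by leaning on the already-established item (1) and a small piece of tree geometry. Both are sound, and your identification of the finite-index ellipticity transfer as the crux (and of where the no-inversions hypothesis enters) is accurate.
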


\begin{proof}
Item \eqref{comm1} is immediate from Example 6.3.4 of
\cite{serre}. Item \eqref{comm3} is Corollary 7.7 of
\cite{basscovering}. For \eqref{comm2}, $H'$ being elliptic at
infinity follows from \eqref{comm1} and \eqref{comm3}. Further, if
$\varepsilon_H \not= \varepsilon_{H'}$, then $H \cap H'$ fixes
pointwise the geodesic line joining these two ends. But then $H \cap
H'$ is elliptic, contradicting \eqref{comm1}. 
\end{proof}

\begin{definition}[Collapse maps]
Let $f \co X \to Y$ be an equivariant simplicial map of $G$--trees. It
is called a \emph{collapse map} if it is surjective and $f^{-1}(v)$ is
connected for every vertex $v \in V(Y)$. In terms of graphs of groups,
this means that $Y/G$ is obtained from $X/G$ by collapsing the
components of a subgraph of $X/G$ to vertices. The vertex groups
associated to these new vertices are the fundamental groups of the
collapsed subgraphs of groups. 
\end{definition}

\begin{definition}[Morphisms]
An equivariant map $f \co X \to Y$ is a \emph{morphism} if it takes
vertices to vertices and edges to edges (and respects the relation of
incidence between vertices and edges). Equivalently, it is a
simplicial map such that no edge is collapsed to a vertex. 
\end{definition}

\begin{definition}[Elementary moves]
Let $X$ be a $G$--tree and $e$ an edge of $X$ with endpoints in
distinct $G$--orbits, such that $G_{\partial_0 (e)} = G_e$. We obtain
a new $G$--tree by collapsing each edge in the orbit of $e$ to a
vertex. This operation is called an \emph{elementary collapse}
move. It is a special case of a collapse map from $X$ to the resulting
tree. What makes it different from a generic collapse map is that the
elliptic subgroups do not change.  The reverse move is called an
\emph{elementary expansion}. 
\end{definition}

\begin{definition}[Deformations]
An \emph{elementary deformation} between $G$--trees $X$ and $Y$ is a
finite sequence of elementary collapse and expansion moves taking $X$
to $Y$. When this occurs, we say that they are in the same
\emph{deformation space}. Elementary deformations preserve the
elliptic subgroups of $G$, and moreover, cocompact $G$--trees are in
the same deformation space if and only if they define the same
elliptic subgroups \cite[Theorem 1.1]{defrigid}. 
\end{definition}

\begin{definition}[Reduced trees]
A $G$--tree is \emph{reduced} if it admits no elementary collapse
moves. Any cocompact $G$--tree can be made reduced by a sequence of
such moves. Note that reduced trees are minimal; one easily verifies
that any edge outside of a $G$--invariant subtree admits an elementary
collapse. 

Warning: our definition of ``reduced'' is not the same as the one
used in \cite{bestvinafeighn}. A $G$--tree is \emph{BF-reduced}
(i.e. reduced in the sense of Bestvina-Feighn) if it is minimal and,
whenever $G_e = G_{\partial_0(e)}$, either $e$ maps to a loop in the
quotient graph or the image of $\partial_0(e)$ in
the quotient graph has valence at least 3. Note that every reduced
$G$--tree is BF-reduced. 

Theorem \ref{mainthm} holds both for reduced trees and for BF-reduced
trees; see Remark \ref{reducedrmk}. 
\end{definition}

\begin{definition}[Folds]
Let $X$ be a $G$--tree and suppose $e$, $e'$ are edges with
$\partial_0 (e) = \partial_0 (e') = v$. Let $u = \partial_1 (e)$ and $u'
= \partial_1 (e')$. Define an equivalence relation $\sim$ on $X$ to be
the smallest equivalence relation satisfying: 
\begin{itemize}
\item $u \sim u'$, $e \sim e'$, and $\overline{e} \sim \overline{e}'$ 
\item $g(x) \sim g(y)$ whenever $x \sim y$ and $g\in G$. 
\end{itemize}
The resulting quotient graph is a simplicial tree with a
$G$--action. There could be inversions, in which case we subdivide the
inverted edge orbit to obtain a $G$--tree. The map $X \to X/\!\!\sim$
is called a \emph{fold}.

It is called a
\emph{type A fold} if $v \not\in Gu \cup Gu'$.
Note that, by minimality of the relation $\sim$, all identifications
among vertices occur within $Gu \cup Gu'$. Hence, in a type A fold, no
identifications occur between vertices of the orbit $Gv$.

\end{definition}
We will make use of the following observation from
\cite[Section~2]{bestvinafeighn}: 

\begin{lemma}\label{typeA}
  Every fold either is of type A, or can be achieved using
  finitely many of the following moves: 
subdivision, type A folds, and the reverse of subdivision. 
\end{lemma}

A key tool used in the proof of Theorem \ref{mainthm} is the 
Dunwoody resolution lemma. First we need a definition, from Dunwoody
and Fenn \cite{dunwoodyfenn}. If $X$ is a 
$G$--tree, a vertex $x$ is \emph{inessential} if it is the initial
endpoint of exactly two edges $e_1$ and $e_2$ and $G_{e_1} = G_x =
G_{e_2}$. Otherwise $x$ is called \emph{essential}. Note that if $x$
is inessential and $y$ is an essential vertex closest to $x$, then
$G_x \subseteq G_y$. Hence every elliptic subgroup fixes an essential
vertex (unless there are none, meaning that $X$ is a line and $G$ acts
by translations). 

\begin{lemma}[Dunwoody resolution lemma]\label{dunwoodylemma}
Let $G$ be an almost finitely presented group. Then there is an integer
$\delta(G)$ such that the following holds: if\/ $T$ is any $G$--tree,
there is a cocompact $G$--tree $T'$ and a simplicial $G$--map $\alpha
\co T' \to T$ such that $T'$ has at most $\delta(G)$ orbits of
essential vertices. 
\end{lemma}

The lemma is Theorem 1.6 of \cite{dunwoodyfenn}. Cocompactness of $T'$
is not explicitly mentioned, but it is evident from the proof. The
number $\delta(G)$ comes from the simplicial complexes $K$ witnessing
the almost finite presentability of $G$. If $L = K/G$ has $\ell_0$
vertices and $\ell_2$ $2$--simplices, then $\delta(G) \leq 2 \dim
H^1(L;\Z_2) + \ell_0 + \ell_2$. 

\begin{lemma}\label{lfsubtree} 
Let $X$ and $Y$ be $G$--trees in the same deformation space and
suppose that $X$ contains a locally finite subtree that is
$G$--invariant. Then $Y$ also contains a $G$--invariant locally finite
subtree. 
\end{lemma}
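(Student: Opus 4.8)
The plan is to reduce the statement to a single elementary move. Since $X$ and $Y$ lie in the same deformation space, there is a finite sequence of elementary collapse and expansion moves carrying $X$ to $Y$; so by induction on the length of that sequence it suffices to prove: if $p \co A \to B$ is an elementary collapse of $G$--trees, then $A$ contains a $G$--invariant locally finite subtree if and only if $B$ does. Throughout, let $Ge_0$ denote the collapsed edge orbit, with $v_0 = \partial_0(e_0)$, $u_0 = \partial_1(e_0)$, $G_{v_0} = G_{e_0}$, and let $\bar v \in B$ be the common image of $v_0$ and $u_0$, so that $G_{\bar v} = G_{u_0}$.

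\emph{Forward implication $(A \Rightarrow B)$.} Let $Z \subseteq A$ be $G$--invariant and locally finite and set $Z' = p(Z)$, a $G$--invariant subtree of $B$. If $e_0 \notin Z$ then no edge of $Ge_0$ has both endpoints in $Z$ (as $Z$ is a subtree), so $p|_Z$ is injective and $Z' \cong Z$. If $e_0 \in Z$, then $Ge_0 \subseteq Z$, and local finiteness of $Z$ at $u_0$ forces $[G_{u_0} : G_{e_0}] < \infty$; hence each component of the collapsed subgraph is a finite star, and a direct count bounds the valence of the resulting vertex of $Z'$ in terms of $[G_{u_0}:G_{e_0}]$ and the $Z$--valences at $v_0$ and $u_0$. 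Away from these vertices $p|_Z$ is a local injection, so in either case $Z'$ is locally finite.

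\emph{Backward implication $(B \Rightarrow A)$.} Let $Z \subseteq B$ be $G$--invariant and locally finite. If $\bar v \notin Z$ then $p^{-1}(Z) \cong Z$ is the desired subtree of $A$. If $\bar v \in Z$, the naive choice $p^{-1}(Z)$ is $G$--invariant but may fail to be locally finite: at a translate of $u_0$ it swallows the entire collapsed star, which has $[G_{u_0}:G_{e_0}]$ edges (infinitely many when that index is infinite). The remedy is to uncollapse $Z$ more frugally, and the key is an orbit count: the edges of $Z$ incident to $\bar v$ form a finite set on which $G_{\bar v} = G_{u_0}$ acts, so if $[G_{u_0}:G_{v_0}]$ is infinite then none of them can lift to an edge incident to a translate of $v_0$ in $A$, since such an edge would have infinite $G_{u_0}$--orbit. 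One therefore builds $Z'$ by rebuilding $Z$ inside $A$, replacing each translate of $\bar v$ by the corresponding translate of $u_0$, re-attaching the uniquely determined lifts of the edges of $Z$, and inserting the orbit $Ge_0$ and the translates of $v_0$ only when forced by an edge of $Z$ at $\bar v$ that genuinely lifts to the $v_0$--side — which, by the observation, can occur only when $[G_{u_0}:G_{v_0}] < \infty$. A vertex count then confirms that $Z'$ is locally finite, and it is $G$--invariant by construction.

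Iterating the two implications along the sequence of moves connecting $X$ to $Y$ then yields a $G$--invariant locally finite subtree of $Y$. I expect the backward implication to be the main obstacle: the obvious candidate $p^{-1}(Z)$ does not in general work, and the point is that one can uncollapse economically enough to preserve local finiteness, which is exactly what the orbit-counting observation provides. The forward implication and the reduction to a single move should be routine.
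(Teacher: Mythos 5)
Your proposal is correct and follows essentially the same route as the paper: reduce to a single elementary collapse, push the subtree forward with a local valence count, and for the reverse direction observe that the full preimage can only fail to be locally finite when the collapsed star is infinite, which is ruled out exactly when some edge of the locally finite subtree lifts to the $v_0$--side. The paper packages the same counting slightly differently (a quasi-isometry estimate for the forward direction, and a dichotomy on connectivity of the lifted edge set rather than on the index $[G_{u_0}:G_{e_0}]$ for the backward one), but these are cosmetic variations on the argument you give.
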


Note that local finiteness itself is not preserved by elementary
deformations. An expansion move outside of a minimal subtree can create
infinite-valence vertices. 

\begin{proof}
It suffices to consider a single elementary collapse move $q\co X \to
Y$ along the edge $e \in E(X)$ with $G_{\partial_0 (e)} = G_e$. First
suppose that $X$ has a $G$--invariant locally finite subtree $X'$. Let
$Y' = q(X')$, a $G$--invariant subtree of $Y$. If $e \not\in X'$ then
$X'$ maps isomorphically to $Y'$, and so $Y'$ is locally
finite. Otherwise, the restriction $q\co X' \to Y'$ is an elementary
collapse. In the proof of \cite[Theorem 7.3]{defrigid} it was observed
that $q$ is a $(3, 2/3)$--quasi-isometry, and that
$\frac{1}{3}(d(x,x') - 2) \leq d(q(x),q(x'))$ for all $x,x'\in X'$. It
follows that the pre-image of a ball of radius $1$ is contained in a
ball of radius $5$ in $X'$. The latter is finite, and so every ball of
radius $1$ in $Y'$ is finite. 

Next consider the same collapse move $q\co X \to Y$ but suppose that
$Y$ contains a locally finite $G$--invariant subtree $Y'$. We wish to
find the same in $X$. Let $Z \subset X$ be the $G$--invariant subgraph
whose edges are $\{ e' \in E(X) - (Ge \cup G\overline{e}) \mid q(e')
\in E(Y')\}$. If $Z$ is connected then let $X' = Z$; it maps
isomorphically to $Y'$ by $q$ and hence is locally finite. 

Otherwise, $\partial_0(e)$ is in $Z$. We define $X'$ to be $Z \cup (Ge
\cup G\overline{e}) = q^{-1}(Y')$. For any vertex $v\in V(X')$
consider the ball $B_v(1)$ of radius $1$ at $v$. Edges of $Z$ in
$B_v(1)$ map injectively to $Y'$, so there are finitely many. It
remains to bound the number of edges of $Ge \cup G\overline{e}$ in
$B_v(1)$.  Note that each component of $Ge \cup G\overline{e}$ is a
cone on some subset $S \subset G \partial_0(e)$ (with cone point in $G
\partial_1(e)$). Each vertex of $S$ is incident to an edge of $Z$ and 
these edges are all distinct. Hence $S$ is finite, because collapsing
$e$ results in a locally finite tree. Hence $Ge \cup G\overline{e}$ is
locally finite, and therefore $B_v(1)$ is finite. 
\end{proof}

\section{Main results}

Here we give two key propositions and then use them to prove Theorems
\ref{mainthm} and \ref{mainthm2}. In order for the main argument to
work, these propositions are stated for a slightly wider class of
trees than just locally finite trees. 

\begin{proposition}\label{collapse-def}
Let $Y$ be a $G$--tree that contains a non-trivial locally finite
$G$--invariant subtree $Y'$ (possibly equal to $Y$). If $q \co X \to
Y$ is any collapse map then $X$ and $Y$ have the same elliptic
subgroups. 
\end{proposition}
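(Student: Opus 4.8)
The plan is to show that a collapse map $q\colon X\to Y$ cannot change the elliptic subgroups, given the hypothesis that $Y$ houses a non-trivial locally finite $G$--invariant subtree $Y'$. One direction is automatic: a collapse map is in particular a simplicial $G$--map, so every subgroup elliptic for $X$ is elliptic for $Y$ (the image of a fixed vertex is fixed). The content is the reverse inclusion — if $H\subseteq G$ is elliptic for $Y$, then $H$ is elliptic for $X$. Equivalently, I want to rule out the possibility that the collapse ``destroys ellipticity'' by producing a subgroup which fixes a vertex downstairs in $Y$ but is hyperbolic or elliptic-at-infinity upstairs in $X$.

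First I would reduce to the case that $Y$ is itself locally finite. Since collapse maps correspond to collapsing a subgraph of groups, and since by Lemma~\ref{subtree}\eqref{i1} the subtree $Y'$ has the same elliptic subgroups as $Y$, I can pull back along $q$: let $X'=q^{-1}(Y')\subseteq X$, a $G$--invariant subtree, and note $q$ restricts to a collapse map $X'\to Y'$. By Lemma~\ref{subtree}\eqref{i1} again, $X'$ and $X$ have the same elliptic subgroups. So it suffices to prove the statement with $Y$ non-trivial and locally finite (replacing $X$ by $X'$). Now the key structural fact kicks in: in a locally finite $G$--tree all vertex and edge stabilizers are commensurable, so by Lemma~\ref{comm} any subgroup $H$ of $G$ that is elliptic for $Y$ falls into exactly one of the three Tits trichotomy classes relative to $X$, and — crucially — each vertex stabilizer of $Y$ is elliptic for $X$ (its image is the fixed vertex, which is actually a lift issue, but at the level of the stabilizer: $G_v$ acts on the connected fiber $q^{-1}(v)$, which is a tree, so $G_v$ is elliptic or elliptic-at-infinity or hyperbolic on $q^{-1}(v)$).

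So the heart of the argument is the fiber analysis. Fix a vertex $v\in V(Y)$ and let $G_v=\mathrm{Stab}(v)$. The fiber $F=q^{-1}(v)$ is a connected subtree of $X$ on which $G_v$ acts. I want to show $G_v$ is elliptic on $F$ (hence on $X$); once I know this for every vertex stabilizer of $Y$, commensurability plus Lemma~\ref{comm}\eqref{comm1} upgrades it to: every elliptic subgroup of $Y$ is elliptic for $X$, finishing the proof. The way to see $G_v$ is elliptic on $F$: since $Y$ is locally finite, $v$ has finitely many incident edges $e_1,\dots,e_k$ in $Y$; their preimages attach to $F$ at finitely many $G_v$--orbits of vertices (the attaching vertices are exactly where hyperbolic-in-$Y$ behavior could originate). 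The quotient $F/G_v$ is finite — this is where local finiteness of $Y$ and cocompactness considerations enter, via the graph-of-groups picture: $F/G_v$ is the collapsed subgraph of groups sitting over $v$, which has finitely many vertices and edges. A group acting cocompactly on a tree with a global fixed point is elliptic; if instead $G_v$ were hyperbolic or elliptic-at-infinity on $F$, the action would still be cocompact, but then $F$ would contain a $G_v$--minimal subtree or horoball structure — and here is the one place where I need to feed in more: I invoke that $G_v$ is finitely generated? No — the proposition's hypotheses do not assume finitely generated stabilizers. Instead I think the right move is: $F/G_v$ finite forces either a fixed vertex, or a hyperbolic element in $G_v$; a hyperbolic element of $G_v$ acting on $F\subseteq X$ would be hyperbolic on $X$, but its projection fixes $v$, so it is elliptic on $Y$ — not yet a contradiction. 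The genuine contradiction comes from commensurability: the edge group $G_{e_i}$ is commensurable to $G_v$, the edge $e_i$ lifts to edges in $X$, and tracking how $G_v$ moves the lifted edge-endpoints against the locally finite subtree $Y'$ (now $Y$) forces finiteness of the relevant $G_v$--orbits, hence no room for translation.

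I expect the main obstacle to be exactly this last point: ruling out that $G_v$ translates along an axis inside the fiber $F$, without the luxury of finite generation. The tool I would lean on is the quasi-isometry estimate already cited in the proof of Lemma~\ref{lfsubtree} — that an elementary collapse is a $(3,2/3)$--quasi-isometry on an invariant subtree, so preimages of balls are contained in balls of bounded radius. Iterating this over the sequence of elementary moves comprising a general collapse map, bounded balls in $Y$ pull back to bounded (hence finite, by local finiteness of $Y$) balls in the relevant subtree of $X$; a group acting with a bounded orbit on a tree is elliptic (Tits), which is the clean contradiction with the existence of a hyperbolic element. So the real technical work is organizing the collapse map as a composition of elementary collapses, restricting everything to the locally finite invariant subtree and its preimage, and propagating the quasi-isometry bound — after which Tits' lemma and Lemma~\ref{comm} do the rest.
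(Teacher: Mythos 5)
Your setup is sound: the easy direction (a simplicial $G$--map preserves ellipticity), the reduction to $Y=Y'$ locally finite by restricting $q$ to $q^{-1}(Y')\to Y'$, and the identification of the real difficulty --- ruling out that a vertex stabilizer $G_v$ acts hyperbolically or is elliptic at infinity inside the fiber $q^{-1}(v)$. But the mechanism you ultimately propose for that step does not work. A general collapse map is \emph{not} a composition of elementary collapse moves: an elementary collapse requires the stabilizer condition $G_{\partial_0(e)}=G_e$, and that condition is precisely what makes its fibers uniformly bounded (stars of diameter at most $2$), which is where the $(3,2/3)$--quasi-isometry estimate comes from. For an arbitrary collapse map the fiber $q^{-1}(v)$ can have infinite diameter --- indeed it can contain the axis of a hyperbolic element, which is exactly the configuration you are trying to exclude --- so preimages of balls are not bounded and ``bounded orbit implies elliptic'' is unavailable. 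If preimages of points under collapse maps were always bounded, the proposition would be true with no hypothesis on $Y$ whatsoever, and it is not: collapsing everything to a point, or the HNN example in the introduction, changes the elliptic subgroups. So the quasi-isometry argument cannot be the engine here.

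The argument the paper uses is the one you brush past when you mention commensurability of $G_{e_i}$ and $G_v$. Suppose $g$ is hyperbolic in $X$ but fixes $y\in Y'$, and set $H=G_y$. Then $H$ contains a hyperbolic element, so the minimal subtree $X_H$ exists, is \emph{unique}, and lies inside the $H$--invariant fiber $q^{-1}(y)$. Now take any other vertex $y'\in Y'$ and $H'=G_{y'}$: local finiteness of $Y'$ makes $H$ and $H'$ commensurable, and Lemma \ref{comm}\eqref{comm3} forces $X_{H'}=X_H$; but $X_{H'}$ must lie in the disjoint fiber $q^{-1}(y')$ --- contradiction. The elliptic-at-infinity case is handled identically using Lemma \ref{comm}\eqref{comm2}: the unique fixed end $\varepsilon_H=\varepsilon_{H'}$ would have to be an end of two disjoint subtrees. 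The point you are missing is that commensurability pins down a single canonical object (minimal subtree or fixed end) shared by the stabilizers of \emph{two distinct} vertices of $Y'$, and disjointness of the two fibers then does all the work; no metric control on the collapse map is needed. As a separate small issue, your claim that $F/G_v$ is finite uses cocompactness of $X$, which the proposition does not assume (and the paper's proof does not need).
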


\begin{proof}
Note that $Y$ and $Y'$ have the same elliptic subgroups, by Lemma
\ref{subtree}\eqref{i1}. 
First we show that $q$ preserves hyperbolicity of elements of
$G$. Suppose $g$ is hyperbolic in $X$ and elliptic in $Y$, with
fixed vertex $y\in Y$. We may assume that $y \in Y'$. Let $H =
G_y$ and consider the $H$--action on $X$. Since $q$ is a collapse map,
$q^{-1}(y)$ is a subtree of $X$, and it is $H$--invariant. The hyperbolic
element $g$ is in $H$, and so $X_H$ is non-empty and unique, and is
contained in $q^{-1}(y)$. 

Now pick any vertex $y' \in Y'$ distinct from $y$, and let $H' =
G_{y'}$. Since $Y'$ is locally finite, the subgroups $H$ and
$H'$ are commensurable. By Lemma \ref{comm}\eqref{comm3}, $H'$
contains a hyperbolic element (so that $X_{H'}$ is non-empty and
unique) and $X_{H'} = X_H$. However, $X_{H'}$ is contained in the
$H'$--invariant subtree $q^{-1}(y')$ which is disjoint from
$q^{-1}(y)$. Thus we have a contradiction. 

Next we show that $X$ and $Y$ have the same elliptic subgroups. 
Let $y\in Y'$ be any vertex and let $H = G_y$. If $H$ is not elliptic
in $X$, it must be elliptic at infinity since $X$ and $Y$ have the
same elliptic elements (by the previous paragraphs). Its fixed end
$\varepsilon_H$ is an end of the $H$--invariant subtree
$q^{-1}(y)$. Now let $y'$ be any other vertex of $Y'$ and let $H' =
G_{y'}$. By Lemma \ref{comm}\eqref{comm2}, $H'$ is elliptic at
infinity and $\varepsilon_{H'} = \varepsilon_H$. But
$\varepsilon_{H'}$ is an end of the $H'$--invariant subtree
$q^{-1}(y')$ which is disjoint from $q^{-1}(y)$. We have a
contradiction, since two disjoint subtrees cannot have an end in
common. Thus $H$ is elliptic. 
\end{proof}

\begin{proposition}\label{fold-def}
Let $Y$ be a $G$--tree containing a non-trivial locally finite
$G$--invariant subtree $Y'$. If $f \co X \to Y$ is any fold then $X$
and $Y$ are in the same deformation space. 
\end{proposition}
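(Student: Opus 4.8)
The plan is to analyze the fold $f \co X \to Y$ according to its Bestvina--Feighn type, and to show in each case that either the fold is already an elementary deformation (no new elliptic subgroups introduced), or else a non-type-A fold would force an infinite-valence vertex into the locally finite subtree $Y'$, contradiction. The key observation is that folds preserve elliptic elements but not necessarily elliptic subgroups; what can go wrong is that a subgroup that was elliptic at infinity in $X$ becomes elliptic in $Y$, or that two distinct conjugates of a vertex group get identified. Since $X$ and $Y$ always have the same hyperbolic elements (a fold does not decrease translation lengths to zero, as no edge is collapsed), and since elliptic-at-infinity subgroups are detected by commensurability via Lemma \ref{comm}, the heart of the matter is to rule out the problematic fold types using local finiteness of $Y'$, exactly as in the proof of Proposition \ref{collapse-def}.

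First I would recall the setup of the fold: edges $e, e'$ with $\partial_0(e) = \partial_0(e') = v$, $u = \partial_1(e)$, $u' = \partial_1(e')$, and the induced identifications. The type of the fold records whether $v, u, u'$ lie in the same $G$--orbits. If the fold is \emph{not} of type A, i.e.\ $v \in Gu \cup Gu'$, then one checks that the fold is in fact (a composition of) an elementary collapse and/or is already a deformation-space-preserving move; more precisely, Bestvina--Feighn show that non-type-A folds do not change the elliptic subgroups, so $X$ and $Y$ lie in the same deformation space immediately by \cite[Theorem 1.1]{defrigid}. So I would reduce to the case of a type-A fold, where $v \notin Gu \cup Gu'$.

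For a type-A fold, the elliptic subgroups can genuinely change: the stabilizer of the image vertex $\bar{u}$ in $Y$ is generated by $G_u$, $G_{u'}$ (when $u, u'$ are in the same orbit) and possibly more, and a subgroup $H$ that was elliptic at infinity for $X$ may become elliptic for $Y$. Following Proposition \ref{collapse-def}, I would argue as follows. Since $Y$ and $Y'$ have the same elliptic subgroups by Lemma \ref{subtree}\eqref{i1}, it suffices to show every vertex stabilizer $H = G_y$ with $y \in V(Y')$ is elliptic for $X$. Suppose not; then $H$ is elliptic at infinity for $X$ with fixed end $\varepsilon_H$. Pick a vertex $y'\in V(Y')$ adjacent to $y$ (local finiteness of $Y'$ makes $H' = G_{y'}$ commensurable with $H$), so by Lemma \ref{comm}\eqref{comm2}, $H'$ is also elliptic at infinity for $X$ with $\varepsilon_{H'} = \varepsilon_H$. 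The point is now to locate $\varepsilon_H$ inside an $H$--invariant subtree of $X$ that is ``over'' $y$, and $\varepsilon_{H'}$ inside a disjoint one over $y'$, and derive a contradiction from two disjoint subtrees sharing an end. Unlike the collapse-map case, where $q^{-1}(y)$ was automatically a connected $H$--invariant subtree, for a fold the preimages $f^{-1}(y)$ need not be connected, so I would instead work with the maximal $H$--invariant subtree of $X$ whose image misses a neighborhood of $y$ in $Y'$, or pass through an auxiliary collapse/expansion presentation of the fold. The main obstacle is precisely this: producing the right disjoint $H$-- and $H'$--invariant subtrees of $X$ in the type-A case, since the fiber structure of a fold is coarser than that of a collapse map. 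I expect the cleanest route is to factor the type-A fold as an expansion followed by a collapse (a standard move), reducing the elliptic-subgroup comparison to an application of Proposition \ref{collapse-def} together with the trivial fact that expansions preserve elliptic subgroups — though one must first check that the intermediate tree still contains a non-trivial locally finite $G$--invariant subtree, which should follow from Lemma \ref{lfsubtree} or a direct argument.
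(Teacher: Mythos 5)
The central premise of your argument is false: a fold does \emph{not} in general preserve hyperbolicity of elements. Although no edge is collapsed, the map is not injective, so translation lengths can drop to zero. Concretely, in a type IA fold identifying $u$ and $u'$, take $g \in G_u$ not fixing $u'$ and $h \in G_{u'}$ not fixing $u$; then $gh$ is hyperbolic in $X$ (ping-pong on the segment $[u,u']$) but fixes the image vertex $\bar u$ in $Y$. This is precisely the phenomenon that the hypothesis on $Y'$ must be used to exclude, and it is where essentially all of the work in the paper's proof lies: one shows that if $g$ is hyperbolic in $X$ and fixes $y \in Y'$, then the injectivity of $Gv \to Y$ (type A) forces $f(v) \notin Y'$ by local finiteness of balls in $Y'$, and then an edge orbit $Ge''$ mapping into $Y'$ (which exists by surjectivity and injects into $Y$) gives infinitely many edges of $Y'$ in a bounded neighborhood of $y$, a contradiction. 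Once hyperbolicity of elements is known to be preserved, Proposition 3.16 of \cite{defrigid} already upgrades this to equality of deformation spaces, so the elliptic-at-infinity analysis you focus on (modeled on Proposition \ref{collapse-def}) is not where the difficulty sits. By taking the hard step for granted, the proposal skips the actual content of the proposition.

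Two further steps do not hold as stated. First, non--type-A folds do \emph{not} preserve elliptic subgroups in general (fold two adjacent edges of the Bass--Serre tree of an HNN extension and watch a hyperbolic element become elliptic); the correct reduction, from \cite{bestvinafeighn}, is that a non--type-A fold factors as a subdivision, two type A folds, and an un-subdivision, and one must then propagate the hypothesis on $Y'$ back through the intermediate trees. Second, the proposed factorization of a type A fold as an elementary expansion followed by a collapse map is not a standard move and fails in general: an elementary expansion preserves elliptic subgroups, so the collapse map would have to have connected fibers over $\bar u$, whereas $f^{-1}(\bar u) = \langle G_u, G_{u'}\rangle\{u,u'\}$ is an infinite, widely scattered vertex set that no single expansion can connect. (The reverse order, collapse-then-expansion, also fails unless $\langle G_e, G_{e'}\rangle$ happens to equal $G_v$ or $\langle G_u, G_{u'}\rangle$.) You correctly identify the disconnectedness of fold fibers as the obstacle to imitating Proposition \ref{collapse-def}, but the route you propose around it does not exist; the fix is the direct counting argument against local finiteness sketched above.
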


\begin{proof}
By Lemma \ref{typeA}, any fold that is not of type A
can be achieved using subdivision, type A folds, and the
reverse of subdivision. Since subdivision preserves deformation
spaces, we may assume the fold is of type A.  Also, by Proposition
3.16 of \cite{defrigid}, it suffices to show that $f$ preserves
hyperbolicity of elements of $G$. 

Suppose the fold happens at $e, e' \in E(X)$ where $\partial_0 (e) 
= \partial _0 (e') = v$ and $\partial_1 (e) = u$, $\partial_1 (e') = 
u'$. Being of type A means that $v \not\in Gu \cup Gu'$, and therefore 
the orbit $Gv$ maps injectively to $Y$ under the fold. 

Now suppose that $g$ is hyperbolic in $X$ and elliptic in $Y$, with
fixed vertex $y\in Y$. We may assume that $y \in Y'$. The set $\{g^i
v\}_{i \in \Z}$ maps by $f$ to an infinite set of vertices that are
equidistant from $y$. In a locally finite tree, every bounded subtree
is finite, and so these vertices cannot lie in $Y'$.  Thus, $f(v) \in
Y - Y'$. It follows that $f(e) = f(e') \in Y - Y'$ as well. 

Note that $f$ is surjective, and since $Y'$ is not a point, it must
contain an edge. Hence there is an edge $e'' \in E(X)$ such that
$f(e'') \in Y'$. The orbits $Ge$ and $Ge'$ both land outside of $Y'$,
so $e'' \not\in Ge \cup Ge'$. Hence $Ge''$ maps injectively to
$Y$. But now the set $\{ g^i e''\}_{i \in \Z}$ maps by $f$ to an
infinite set of edges of $Y'$ that lie in a bounded neighborhood of
$y$, a contradiction. 
\end{proof}

\begin{proof}[Proof of Theorem \ref{mainthm2}] 
Let $f \co X \to Y$ be as in the statement of the theorem. 
Any simplicial map factors as a collapse map followed by a
morphism. Being surjective, the morphism factors as a finite
composition of folds, by \cite[Section 2]{bestvinafeighn}. This step
uses the assumptions that $X$ is cocompact and $Y$ has finitely
generated edge stabilizers. Now we have $f$ represented as 
\[ X \to Y_0 \to Y_1 \to \dotsm \to Y_n = Y\]
with $X \to Y_0$ a collapse map and each $Y_i \to Y_{i+1}$ a
fold. Note that none of the trees has a fixed point, since $Y$ does
not. 

Applying Proposition \ref{fold-def} to the last fold, we find that
$Y_{n-1}$ is related to $Y_n$ by a deformation, and so by Lemma
\ref{lfsubtree} it contains a $G$--invariant locally finite
subtree (which is not a point). Proceeding from right to left along
the sequence of folds, we infer 
the same properties for every $Y_i$. Now Proposition 
\ref{collapse-def} says that $X$ and $Y_0$ have the same elliptic
subgroups. Since $X$ and $Y_0$ are both cocompact, they are in the same
deformation space (along with $Y$). 
\end{proof}

\begin{proof}[Proof of Theorem \ref{mainthm}] 
Let $T$ be a reduced locally finite $G$--tree with finitely generated
stabilizers. It suffices to bound the number of vertices of
$T/G$, since there will then be at most $\abs{V(T/G)} - 1 +
\beta_1(G)$ edges.  We may assume $T$ is not a point or a line, since
$\abs{V(T/G)} \leq 2$ in those cases. 

By Lemma \ref{dunwoodylemma} there is a cocompact $G$--tree 
$T'$ and a simplicial $G$--map $\alpha \co T' \to T$ such that $T'$
has at most $\delta(G)$ orbits of essential vertices. The map is
surjective because $T$ is minimal. By Theorem \ref{mainthm2}, $T'$ and
$T$ are in the same deformation space. 

Now let $v$ be any vertex of $T$. The elliptic subgroup $G_v$ fixes an
essential vertex $w$ of $T'$. Either $\alpha(w) = v$ or $G_v$ fixes
the path from $v$ to $\alpha(w)$. In the latter case, the first edge
$e$ along this path satisfies $G_e = G_v$. Since $T$ is reduced, $e$
maps to a loop in $T/G$. The number of such edge orbits is bounded by
$\beta_1(G)$. In particular, the number of vertex orbits that are not
images of essential vertex orbits of $T'$ is bounded by
$\beta_1(G)$. Hence the total number of vertex orbits of $T$ is
bounded by $\delta(G) + \beta_1(G)$. 
\end{proof}

\begin{remark}\label{reducedrmk}
Theorem \ref{mainthm} remains true even if $T$ is only BF-reduced,
though the proof is considerably more involved. Fortunately, the
arguments of \cite{bestvinafeighn} can be applied verbatim. The proof
proceeds as above until one has the $G$--map $\alpha\co T'
\to T$ with both trees in the same deformation space. Then, the
proof of the main result of \cite[Section 4 ``The elliptic
  case'']{bestvinafeighn} applies. This is so because every edge
stabilizer of $T$ is elliptic in $T'$. That argument shows that 
$|V(T/G)| \leq  4\delta(G) + 9\beta_1(G) - 5$. 
\end{remark}

\section{Examples}

If one drops the assumption that stablilizers are finitely
generated, then Theorem \ref{mainthm} becomes false. Our examples are
based on $2$--generator generalized Baumslag--Solitar groups.  These
are discussed in \cite{levitt-quotients}.  They fall into two
families, one of which we describe here. Recall that a
\emph{generalized Baumslag--Solitar group} is a group that admits a
graph of groups decomposition in which every vertex and edge group is
infinite cyclic. The figure below describes such a graph of
groups. Each edge-to-vertex morphism $\Z \to \Z$ is multiplication by
a non-zero integer $q_i$ or $r_i$.
\[ 
\begin{tikzpicture} [x=1.1mm,y=1.1mm] 
\small
\filldraw[fill=black,thick] (5,5) circle (.6mm);
\filldraw[fill=black,thick] (23,5) circle (.6mm);
\filldraw[fill=black,thick] (41,5) circle (.6mm);
\filldraw[fill=black,thick] (69,5) circle (.6mm);
\filldraw[fill=black,thick] (87,5) circle (.6mm);

\draw[very thick] (5,5) -- (50,5);
\draw[very thick] (60,5) -- (87,5); 
\draw (55.3,5.03) node{$\dotsc$}; 

\draw (8,7.5) node {$q_0$};
\draw (20,7.5) node {$r_1$};
\draw (26,7.5) node {$q_1$};
\draw (38,7.5) node {$r_2$};
\draw (44,7.5) node {$q_2$};
\draw (64.5,7.5) node {$r_{k-1}$};
\draw (74,7.5) node {$q_{k-1}$};
\draw (84,7.5) node {$r_k$};
\end{tikzpicture}
\]
We shall assume the graph of groups is reduced, which means that
$r_i, q_j \not= \pm 1$ for all $i,j$. Then the group is $2$--generated
if and only if $r_i$ and $q_j$ are relatively prime whenever $1 \leq
i \leq j \leq k-1$; see the discussion on page 9 of 
\cite{levitt-quotients}, which depends on \cite[Theorem
1.1]{levitt-plateaux}. (One direction is Lemma 2.5 and the converse 
is an easy exercise.) The two generators are the generators of the
vertex groups at the terminal vertices.  

Let $G$ be the group shown and assume it is $2$--generated. The
Bass--Serre tree $X$ is reduced and locally finite, with vertices of
valence $|q_0|$, $|r_i| + |q_i|$, and $|r_k|$. The surjection $F_2 \to
G$ defines an action of $F_2$ on $X$ with the same quotient
graph. Hence edge stabilizers have index $\abs{q_i}$ or $\abs{r_i}$
in their neighboring vertex stabilizers, and $X$ is reduced as an
$F_2$--tree. These stabilizers are all free groups of infinite rank,
by a theorem of Bieri on cohomological dimension in this setting
\cite[Section 6]{bieri}. Namely, if these stabilizers were finite rank
free groups, then Bieri's theorem applies and says they each have
cohomological dimension zero. But then the quotient graph of groups
has trivial fundamental group, a contradiction. 

To conclude, say by taking $q_i = 2$ and $r_i = 3$ for all $i$, we
have a reduced locally finite $F_2$--tree with $k+1$ vertex orbits,
for arbitrary $k$. One further adjustment, setting $q_0 = r_k = 5$,
results in reduced $F_2$--actions on the \emph{same}
tree, the regular tree of valence 5, of arbitrarily large complexity. 

\bibliographystyle{amsalpha}
\bibliography{bib}

\end{document}